\newtheorem{theorem}{Theorem}
\newtheorem{proposition}{Proposition}
\newtheorem{definition}{Definition}
\newtheorem{example}{Example}
\newtheorem{remark}{Remark}
\newtheorem{corollary}{Corollary}
\title{Combinatorics of NC-Probability Spaces with Independent Constants} 
\author{%
   Carlos Diaz-Aguilera\footnote{e-mail: carlos.edo.diaza@gmail.com}
    \and 
   Tulio Gaxiola\footnote{Universidad Autónoma de Sinaloa, e-mail:  tuliogax@uas.edu.mx}
  \and 
  Jorge Santos\footnote{UNAM, Instituto de Matemáticas, e-mail:  el.santos.el@gmail.com
    }
    \and Carlos Vargas\footnote{e-mail: obieta@gmail.com}
    }
\newcommand{\partition}[3]{
  \begin{tikzpicture}[x=0.22cm,y=0.3cm,line cap=round,line width=0.25mm]
    \draw (0,0) -- (0,1);
    \draw (1,0) -- (1,1);
    \draw (2,0) -- (2,1);
    \draw (3,0) -- (3,1);

    \ifnum#1=0%
      \draw (0.5,1.4) node[centered] {$\bullet$};
    \else
      \draw (0,1) -- (1,1);
      \draw (0.5,1.4) node[centered] {$\times$};
    \fi
    
    \ifnum#2=0%
      \draw (1.5,1.4) node[centered] {$\bullet$};
    \else
      \draw (1,1) -- (2,1);
      \draw (1.5,1.4) node[centered] {$\times$};
    \fi

    \ifnum#3=0%
      \draw (2.5,1.4) node[centered] {$\bullet$};
    \else
      \draw (2,1) -- (3,1);
      \draw (2.5,1.4) node[centered] {$\times$};
    \fi
  \end{tikzpicture}
}
\newcommand{\partitionb}[3]{
  \begin{tikzpicture}[x=0.22cm,y=0.3cm,line cap=round,line width=0.25mm]
    \draw (0,0) -- (0,1);
    \draw (1,0) -- (1,1);
    \draw (2,0) -- (2,1);

    \ifnum#1=0%
      \draw (0.5,1.4) node[centered] {$\bullet$};
    \else
      \draw (0,1) -- (1,1);
      \draw (0.5,1.4) node[centered] {$\times$};
    \fi
    
    \ifnum#2=0%
      \draw (1.5,1.4) node[centered] {$\bullet$};
    \else
      \draw (1,1) -- (2,1);
      \draw (1.5,1.4) node[centered] {$\times$};
    \fi

    \ifnum#3=0%
      \draw (2.5,1.4) node[centered] {$\bullet$};
    \else
      \draw (2,1) -- (3,1);
      \draw (2.5,1.4) node[centered] {$\times$};
    \fi
  \end{tikzpicture}
}
\newcommand{\partitionc}{
  \begin{tikzpicture}[x=0.22cm,y=0.3cm,line cap=round,line width=0.25mm]
    \draw (0,0) -- (0,1);
    \draw (1,0) -- (1,1);
    \draw (2,0) -- (2,1);
    \draw (0,1) -- (2,1);
  \end{tikzpicture}
}
\newcommand{\partitiond}{
  \begin{tikzpicture}[x=0.22cm,y=0.3cm,line cap=round,line width=0.25mm]
    \draw (0,0) -- (0,1);
    \draw (1,0) -- (1,0.6);
    \draw (2,0) -- (2,1);
    \draw (0,1) -- (2,1);

    \draw (0.5,1.4) node[centered] {$\bullet$};
    \draw (1.5,1.4) node[centered] {$\bullet$};
    \draw (2.5,1.4) node[centered] {$\times$};
  \end{tikzpicture}
}
\begin{document}
\maketitle
\begin{abstract}
    
Unlike classical and free independence, the boolean and monotone notions of independence lack of the property of independent constants. 

In the scalar case, this leads to restrictions for the central limit theorems, as observed by F. Oravecz.

We characterize the property of independent constants from a combinatorial point of view, based on cumulants and set partitions. This characterization also holds for the operator-valued extension.

Our considerations lead rather directly to very mild variations of boolean and monotone cumulants, where constants are now independent. These alternative probability theories are closely related to the usual notions. Hence, an important part of the boolean/monotone probability theories can be imported directly. 

We describe some standard combinatorial aspects of these variations (and their cyclic versions), such as their M\"obius functions, which feature well-known combinatorial integer sequences. 

The new notions with independent constants seem also more strongly related to the operator-valued extension of c-free probability.
\end{abstract}

\section{Introduction}

Non-Commutative or quantum probability goes back to the 70's, with the works of Cushen, Hudson, Giri and von Waldenfels (\cite{CH71, Hud73, GW78, Wal78}). The main idea is to understand the notion of stochastic independence as an algebraic relation, and then to consider new algebraic relations (in particular between non-commuting algebras of operators) as new notions of independence, hopefully leading to robust probability theories, with their own limit theorems, Gaussian/Poisson distributions, models and applications. 

In the following couple of decades, D. V. Voiculescu introduced and developed free probability theory (\cite{Voi83}), one of the most prominent branches of non-commutative probability.

R. Speicher investigated free probability from a combinatorial perspective, using set partitions and cumulants, \cite{Spe94} and included a new (though somewhat simpler), boolean probability theory \cite{SW97} (see also c-free probability \cite{BS91, BLS96}). 

A. Ben-Ghorbal and M. Sch\"urmann provided a categorical classification for non-commutative notions of stochastic independence of operator algebras \cite{BS02} (based on the earlier combinatorial classification \cite{Spe97}). N. Muraki extended the categorical classification to non-symmetric notions of independence \cite{Mur03} (which included the monotone independence \cite{Mur01}, see also U. Franz \cite{Fra01} and Hasebe-Saigo \cite{HS11}).

Voiculescu established concrete applications for free probability in his seminal work \cite{Voi91}, providing a more conceptual approach to the understanding of limiting eigenvalue distributions from random matrix models, started by E. Wigner's \cite{Wig58}, extended by V. Marcenko and L. Pastur \cite{MP67}, and more systematically by V. Girko. 

The application of free probability to such models, and more, has been very successfully addressed by multiple authors in the last century. Many of these applications are based on the crucial concept of operator-valued free independence/probability, introduced by Voiculescu in \cite{Voi95} (see also \cite{Spe98}).

When it comes to boolean independence, for non-trivial cases, the constants with respect to a conditional expectation $\mathbb F:\mathcal A\to \mathcal B$ are \emph{not independent} from the rest of the algebra, as it occurs in the free or classical situation. 

This leads to some immediate problems. For example, the boolean and monotone Gaussian distributions are somehow less universal, as the Central Limit Theorems require \emph{centered} independent non-commutative random variables. At the operator-valued level, one might say that it is objectively hard to present a robust model for $\mathcal B$-valued boolean independent variables (although both monotone and boolean probability theories are quite useful for describing free probabilistic objects, and a \emph{cyclic} monotone independence has found practical applications to random matrix theory \cite{CHS18}). 

\subsection{Main Contributions and Organization}

The conditions for independent constants can be conveniently characterized in terms of cumulants and set partitions (or more generally, on a property for weights on families of set partitions, see Section 3). 

The main concept that we discuss is the following property (for definitions about non-commutative probability, partitions, cumulants, and weights, see Section 2).

\begin{definition}
Consider the posets of set partitions $\mathcal P(n)$ of the set $\{1,2,3,\dots n\}$, $n\geq 1$ (with partial order of reverse refinement of blocks). Denote by $0_n,1_n\in \mathcal P(n)$, respectively, the minimum and maximum element of $\mathcal P (n)$. 

A subfamily $\mathcal L=\bigcup_n \mathcal L(n) \subseteq \mathcal P$ of set partitions, with $L(n)\subseteq \mathcal P(n)$ has the \emph{Singleton-Inductive} property (SI) iff for any $n\geq 0$ and $r\leq n+1$, the sub-poset $\{\pi \in \mathcal L(n+1): \{r\}\in \pi\}\subseteq \mathcal L(n+1)$ is isomorphic to $\mathcal L(n)$.
\end{definition}
All partitions ($\mathcal L=\mathcal P$, corresponding to classical independence) and non-crossing partitions ($\mathcal L=\mathcal {NC}$, to free independence) are SI, with the isomorphisms given, respectively, by $\Psi^n_r: \mathcal P(n)\to \mathcal P(n+1)$, which simply adds the singleton $\{r\}$ to each partition $\pi\in \mathcal P(n)$ (shifting the rest of the partition, see Section 3), and its restriction to $\mathcal {NC}$. 

Interval partitions ($\mathcal L=\mathcal I$, boolean independence) are not SI, and neither are the monotone weights on partitions.

Our main theorem states that SI families of partitions (or weights) produce cumulants respecting the independence of constant random variables.
\begin{theorem}\label{mainthm}
Let $(\mathcal A,\mathcal B, \mathbb F)$ be an OVPS and consider cumulants $(c_n)_{n\geq 1}$ w.r.t. an SI subfamily of set partitions $\mathcal L$. 

Then for all $n\geq 2$ and any $a=(a_1,a_2,\dots,a_n)\in \mathcal A^n$, the cumulant $c_n(a)=0$, whenever there is any $a_i\in \mathcal B$, $i\leq n$. In other words, the algebra of constants $\mathcal B$ is independent from the whole algebra $\mathcal A$.
\end{theorem}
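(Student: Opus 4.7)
The plan is to argue by strong induction on $n$, using moment-cumulant inversion to extract $c_n$ and then separating partitions according to whether $\{r\}$ is one of their singleton blocks. The SI property is exactly what is needed to match the singleton-bearing partitions in $\mathcal{L}(n)$ with the full family $\mathcal{L}(n-1)$, while the inductive hypothesis eliminates the rest.

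\medskip

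\textbf{Base case.} For $n=2$ and $b\in\mathcal{B}$, the moment-cumulant relation gives $\mathbb{F}[ba]=c_2(b,a)+c_1(b)c_1(a)$. The $\mathcal{B}$-bimodularity of $\mathbb{F}$ yields $\mathbb{F}[ba]=b\,\mathbb{F}[a]=c_1(b)c_1(a)$, forcing $c_2(b,a)=0$; the case $c_2(a,b)$ is symmetric.

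\medskip

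\textbf{Inductive step.} Fix $n\geq 3$ and suppose the claim holds for all orders $2\leq k<n$. Let $a=(a_1,\dots,a_n)$ with $a_r=b\in\mathcal{B}$, and isolate the top cumulant as
$$c_n(a) \;=\; \mathbb{F}[a_1\cdots a_n] \;-\; \sum_{\substack{\pi\in\mathcal{L}(n)\\ \pi\neq 1_n}} c_\pi[a].$$
Split the sum by the block $V\in\pi$ that contains $r$. If $|V|\geq 2$, then $|V|\leq n-1$ (since $\pi\neq 1_n$ forces at least two blocks), and the inductive hypothesis kills $c_{|V|}(a_V)$, hence $c_\pi[a]=0$. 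The surviving $\pi$ are those with $V=\{r\}$, and by SI these are in bijection with $\mathcal{L}(n-1)$ through $\Psi^{n-1}_r$; write $\pi=\Psi^{n-1}_r(\tilde\pi)$ with $\tilde\pi\in\mathcal{L}(n-1)$. In each such $c_\pi[a]$ the singleton contributes a factor $c_1(b)=b$, while the rest reassembles into $c_{\tilde\pi}$ applied to $a_1,\dots,a_{r-1},a_{r+1},\dots,a_n$. Using the $\mathcal{B}$-bimodularity of each cumulant $c_k$, the factor $b$ is absorbed into the adjacent slot, and summing over $\tilde\pi\in\mathcal{L}(n-1)$ the moment-cumulant formula on $n-1$ letters returns $\mathbb{F}[a_1\cdots a_{r-1}(ba_{r+1})a_{r+2}\cdots a_n]=\mathbb{F}[a_1\cdots a_n]$. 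This cancels the leading moment, giving $c_n(a)=0$.

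\medskip

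\textbf{Anticipated obstacle.} The delicate step is the absorption of the singleton factor $c_1(b)=b$ in the operator-valued nested cumulant $c_\pi$. Because $c_\pi$ is a prescribed $\mathcal{B}$-bimodule contraction of its block cumulants rather than a mere product, one has to check that extracting $\{r\}$ from $\pi$ places $b$ in exactly the slot where the $(n-1)$-variable moment-cumulant expansion expects a $\mathcal{B}$-multiplier. For $\mathcal{L}=\mathcal{P}$ and $\mathcal{L}=\mathcal{NC}$ this compatibility is immediate from the recursive/multiplicative construction of $c_\pi$; in the general SI setting, it is really what the poset isomorphism $\Psi^{n-1}_r$ is supposed to enforce at the cumulant level, and verifying that the definition of $c_\pi$ is engineered to respect singleton extraction is the crux on which the theorem hinges.
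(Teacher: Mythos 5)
Your proposal is correct and follows essentially the same route as the paper: induction on $n$, a three-way split of the partitions according to the block containing the position of $b$, the induction hypothesis to kill blocks of size $2\le k\le n-1$, and the SI bijection $\Psi_r$ to match the singleton-bearing partitions with the full $(n-1)$-letter moment--cumulant expansion (the paper phrases this as equating the expansions of $\mathbb{F}(a_1\cdots b\cdots a_n)$ and $\mathbb{F}(a_1\cdots (a_{r-1}b)\cdots a_n)$, which is your cancellation rearranged). The ``delicate step'' you flag --- absorbing $c_1(b)=b$ into an adjacent slot of the nested $c_\pi$ --- is resolved exactly as you suggest, via the recursive interval-block definition of $c_\pi$ and the $\mathcal{B}$-balanced property inherited from $\mathbb{F}$.
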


Proof. Section 3.

We explore examples of SI families in Section 3. In particular, we slightly modify the boolean interval partitions, the monotone weights on non-crossing partitions, and their cyclic versions, to make them SI.

We comment on the modified boolean probability theory (Section 4). We begin by understanding its intersection with the usual boolean probability. At the scalar-valued level, we recover Oravecz's Fermi-boolean probability theory. We also briefly discuss connections with c-free probability.

In Section 5 we describe the new lattices of interval partitions and their M\"obius functions (which are useful for computing cumulant-to-cumulant formulas). Our results feature standard combinatorial recursions and sequences.

We thank O. Arizmendi, A. Beshenov and H. Yoshida for useful comments and remarks.

\section{Preliminaries}

\subsection{Non-Commutative Probability Spaces}

\begin{definition}
An Operator-Valued (Non-Commutative) Probability Space (or, simply, $\mathcal B$-valued probability space), is a triplet $(\mathcal{A},\mathcal{B},\mathbb{F})$ (often denoted just by $(\mathcal{A},\mathbb{F})$), where:

$\mathcal{A}$ is a $*$-algebra with multiplicative unit $1_{\mathcal A}$,

$\mathcal{B}\subseteq \mathcal{A}$ is a sub-algebra of $\mathcal{A}$ which may not contain the unit, and 

$\mathbb{F}\colon\mathcal{A}\rightarrow\mathcal{B}$ is a conditional expectation. This means that $\mathbb{F}$ is a $\mathcal{B}$-linear map satisfying, for all $a,\mathcal{A}$ and $b,b'\in \mathcal{B}$, $$\mathbb{F}(bab')=b\mathbb{F}(a)b', \quad \mathbb F(b)=b.$$
We say that two operator-valued probability spaces $(\mathcal{A},\mathcal{B}_1,\mathbb{F}_1)$, $(\mathcal{A},\mathcal{B}_2,\mathbb{F}_2) $ are \emph{compatible} if $\mathcal{B}_1\subseteq \mathcal{B}_2$ and $\mathbb{F}_1\circ\mathbb{F}_2=\mathbb{F}_1$.
\end{definition}

The elements $a_1,a_2,\dots a_n \in \mathcal A$ are called non-commutative random variables. We denote by $\langle a_1,a_2,\dots, a_n \rangle\subseteq \mathcal A$ the non-unital algebra that they generate.

The usual notion of stochastic independence in probability theory, is a relation between unital algebras of commuting random variables, which can be established in terms of the factorization of mixed moments, whenever these determine the joint distributions. 

For example, two real-valued random variables $X,Y$ with compact support commute and their mixed moments determine their joint distributions. Thus the independence of $X$ and $Y$ can be characterized by the factorization of mixed moments:
$$\mathbb E(X^nY^m)=\mathbb E(X^n)\mathbb E(Y^m),\quad \text{for all } m,n\geq 0.$$
If (possibly non-commuting) elements in operator algebras are regarded as non-commutative random variables with respect to a functional or a conditional expectation, notions of independence may be defined, including a generalization of classical independence, called \emph{tensor independence}, but also some new alternatives.  
\begin{definition}
Let $(\mathcal{A},\mathcal{B},\mathbb{F})$ be 
an OVPS and $\mathcal{A}_1,\mathcal{A}_2,\dots,\mathcal{A}_n\subseteq \mathcal{A}$ sub-algebras (not necessarily unital) such that $\mathcal{B}\subseteq \mathcal{A}_i$ for all $i\leq n$. 
We say that:
\begin{enumerate}
    \item The algebras $\mathcal{A}_1,\mathcal{A}_2,\dots,\mathcal{A}_n$ are $\mathcal{B}$-\emph{tensor independent} iff elements from \emph{different} algebras commute and  $$\mathbb{F}(a_1a_2\dots a_n)=\mathbb{F}(a_1)\mathbb{F}(a_2)\cdots \mathbb{F}( a_n),$$ holds for any tuple with $a_i\in\langle \mathcal{A}_i, 1_{\mathcal A}\rangle$.
\item Let $\dot{a}=a-\mathbb{F}(a)$ for all $a\in \mathcal A$. The algebras $\mathcal{A}_1,\mathcal{A}_2,\dots,\mathcal{A}_n$ are $\mathcal{B}$-\emph{free} iff  $$\mathbb{F}(\dot{a}_1\dot{a}_2\dots \dot{a}_k)=0,$$ for all $k\geq 0$ and all tuples $a_i\in\mathcal{A}_{j(i)} ,i\leq k$, such that $j(1)\neq j(2)\neq \cdots \neq j(k)$.  
    
    \item  The algebras $\mathcal{A}_1,\mathcal{A}_2,\dots,\mathcal{A}_n$ are $\mathcal{B}$-\emph{boolean independent} iff $$\mathbb{F}(a_1a_2\cdots a_m)=\mathbb{F}(a_1)\mathbb{F}(a_2)\cdots \mathbb{F}(a_m),$$ for any $m\geq 1$ and any $a_i\in \mathcal{A}_{j(i)}$, $i\leq n$, such that $j(1)\neq j(2)\neq \cdots \neq j(k)$.
    \end{enumerate}
    \end{definition}

\begin{remark}
Although each factorization seems to apply for a specific form of mixed moment, one should note that they actually allow us to express any mixed moment in terms of moments restricted to the independent algebras. For example, for boolean independent $a_1,a_2,a_3$ we get that $$\mathbb F(a_1a_1a_2a_2a_1a_1a_3a_3a_3a_2a_2a_3a_3)=\mathbb F(a_1^2)\mathbb F(a_2^2)\mathbb F(a_1^2)\mathbb F(a_3^3)\mathbb F(a_2^2)\mathbb F(a_3^2),$$ whereas for tensor independence we get $\mathbb F(a_1^4)\mathbb F(a_2^4)\mathbb F(a_3^5)$. For the free case the expression needs to be solved inductively.

Tensor and classical independence extend to the unital algebras (i.e. $\mathcal A_1$,$\mathcal A_2$ are independent iff  $\langle \mathcal A_1,1_{\mathcal A}\rangle$,$\langle \mathcal A_2,1_{\mathcal A}\rangle$ are independent).
\end{remark}

\begin{remark}
Muraki's Monotone independence \cite{Mur01} is a non-symmetric, non-unital notion of independence (associative, see \cite{Fra01}). It is simpler to describe it through examples:

If $(a_1,a_2,a_3)$ are monotone independent (in that order), then we have, for instance,
$$\mathbb F(a_1a_2a_1a_2a_1a_2a_1a_2)=\mathbb F(a_1\mathbb F(a_2)a_1\mathbb F(a_2)a_1\mathbb F(a_2)a_1\mathbb F(a_2)).$$
We also have, for example, $$\mathbb F(a_1a_1a_2a_2a_3a_3a_3a_2a_2a_1a_1a_2a_2a_3a_3)=\mathbb F(a_1^2\mathbb F(a_2^2\mathbb F(a_3^3)a_2^2)a_1^2\mathbb F(a_2^2)\mathbb F(a_3^2)).$$

In order to factorize a general mixed moment, one first evaluates on the intervals of the last algebra (in the last example, the two intervals of the variable $a_3$). After evaluating $\mathbb F$ in those intervals, they become elements of $\mathcal B$ which are attached to their neighboring elements from the remaining algebras. 

Then one evaluates on the new interval partitions involving elements of the next algebra ($\langle a_2,\mathcal B \rangle$ for the example). And so on, until the last evaluation of $\mathbb F$ on the (necessarily unique) interval of elements from the remaining algebra ($\langle a_1, \mathcal B \rangle$ in this case). 
\end{remark}

\subsection{Set-Partitions and Cumulants}
\begin{definition}
Let $n\geq 1$:
\begin{enumerate}
    \item A \emph{partition} is a decomposition $\pi=\{V_1,V_2,\dots,V_k\}$ of the set $[n]:=\{1,2,\dots, n\}$ into non-empty, mutually disjoint subsets $V_1,V_2,\dots, V_k\subseteq [n]$ called \emph{blocks}.
    \item We say that two blocks $V\neq W\in \pi$  have a \emph{crossing}, if there is a tuple $1\leq a<b<c<d\leq n$, with $a,c\in V$ and $b,d\in W$. If $\pi$ has no pair of such blocks, we say that $\pi$ \emph{non-crossing}. 
    \item A block $V\in \pi$ in a non-crossing partition is an \emph{inner block} if there exists a different block $W\in \pi$ and $i,j\in W$ with $i<k<j$ for all $k\in V$. A block that is not an inner block is called \emph{outer block}.
    \item A partition $\pi$ is an \emph{interval} partition if it only contains outer blocks, (and thus the blocks are intervals of consecutive numbers).  
\end{enumerate}
The sets of partitions, non-crossing partitions and interval partitions of $[n]$ will be denoted by $\mathcal{P}(n)$, $\mathcal{NC}(n)$ and $\mathcal{I}(n)$ respectively. We also write $\mathcal P:=\bigcup_{n\geq 1} \mathcal P(n)$ and similarly for $\mathcal{NC}$ and $\mathcal{I}$. 
\end{definition}
Set-partitions are partially ordered sets (with the order of reverse refinement of blocks). Now we recall the definitions for multiplicative families of maps and cumulants (see \cite{Spe98}).
\begin{definition}\label{multext}
Let $\mathcal{A}$ an algebra and $\mathcal{B}\subset \mathcal{A}$ a subalgebra.

1. Suppose that the commutator $[\mathcal{A},\mathcal{B}]$ is trivial. Then we define the multiplicative extension to the set of all partitions $(f_{\pi})_{\pi\in \mathcal P}$ of a collection of multilinear maps $(f_n)_{n\geq 1}$, $f_n:\mathcal A^n\to \mathcal B$ as follows: For $\pi=\{V_1, V_2, \dots , V_k\} \in \mathcal P (n)$, $f_{\pi}:\mathcal A^n\to \mathcal B$ is defined as:
$$f_{\pi}(a_1,a_2,\dots, a_n):=\prod_{i\leq k}f_{|V_i|}(a_{j_1},a_{j_2},\dots, j_r),$$
where $V_i=(j_1,j_2, \dots, j_r)$.

2. In the general case, when $[\mathcal{A},\mathcal{B}]$ is non-trivial, we may only define the multiplicative extension of a family of maps for non-crossing partitions $(f_{\pi})_{\pi\in \mathcal NC}$. For $\pi=\{V_1, V_2, \dots , V_k\} \in \mathcal {NC} (n)$, consider an interval block $V_i=(j,j+1, \dots, j+r-1)$ from $\pi$, and define $f_{\pi}:\mathcal A^n\to \mathcal B$ inductively as: $$f_{\pi}(a_1,a_2,\dots , a_n)=f_{\pi\setminus V_i}(a_1,a_2,\dots,f_{r}(a_j, a_{j+1},\dots, a_{j+r-1})a_{j+r},\dots a_n).$$
\end{definition}

\begin{remark}
1. One should be aware that any non-crossing partition contains at least one interval block. 

2. In principle one could also define
$$f_{\pi}(a_1,a_2,\dots , a_n)=f_{\pi\setminus V_i}(a_1,a_2,\dots a_{j-1}f_{r}(a_j, a_{j+1},\dots, a_{j+r-1}),a_{j+r},\dots a_n).$$
The maps for which the two expressions coincide are called $\mathcal B$-balanced. For balanced maps, the previous definition does not depend on the order in which interval blocks are chosen.

The properties of conditional expectations make $\mathbb F_n:(a_1,a_2,\dots, a_n)\mapsto \mathbb F(a_1a_2\cdots a_n)$, $n\geq 1$ a $\mathcal B$-balanced family of maps.
\end{remark}

\begin{definition}
Classical $\mathcal{B}$-Cumulants. 
Let $(\mathcal{A},\mathcal{B},\mathbb{F})$, be an OVPS with $[\mathcal A,\mathcal B]=0$. The classical cumulants are the $\mathcal B$ multi-linear maps $c_n\colon \mathcal{A}^n\rightarrow \mathbb{C}
$ which are defined inductively by the moment-cumulant formulas $$\mathbb F(a_1a_2\dots a_n)= \sum_{\pi\in \mathcal{P}(n)} c_\pi(a_1,a_2,\dots, a_n),$$ where $c_\pi(a_1,a_2,\dots, a_n)$ is defined as in the first paragraph of Definition \ref{multext}.
\end{definition}

\begin{definition}
Let $(\mathcal{A},\mathcal{B},\mathbb{F})$ a $\mathcal{B}$-valued space, the free cumulants are a family $(r_n)_{n\geq 1}$ of $\mathcal B$-multi-linear maps, $r_n\colon \mathcal{A}^n\rightarrow\mathcal{B}$ defined inductively by the equations $$\mathbb{F}(a_1a_2\dots a_n)= \sum_{\pi\in \mathcal{NC}(n)} r_\pi(a_1,a_2,\dots, a_n),$$ with $r_\pi(a_1,a_2,\dots, a_n)$ defined as in the second paragraph of Definition \ref{multext}.

Similarly, the boolean cumulants are the family of maps $(b_n)_{n\geq 1}$, $b_n\colon \mathcal{A}^n\rightarrow\mathcal{B}$ defined inductively by the equations $$\mathbb{F}(a_1a_2\dots a_n)= \sum_{\pi\in \mathcal{I}(n)} b_\pi(a_1,a_2,\dots, a_n).$$
\end{definition}

\begin{theorem}[\cite{Spe94, SW97, Spe98}]
Cumulants characterize independence.
The algebras $\{\mathcal{A}_i\}_{i\leq n}$ are independent (in the boolean, tensor or free sense) if and only if all the corresponding mixed cumulants vanish.
\end{theorem}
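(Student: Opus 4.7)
The result is a classical one; the plan is to prove all three cases uniformly via the moment-cumulant formula and Möbius inversion on the relevant poset. For a fixed flavor of independence, let $\mathcal L(n) \in \{\mathcal P(n), \mathcal{NC}(n), \mathcal I(n)\}$ be the governing family of partitions and let $c_n$ be the corresponding cumulant, so that $\mathbb F(a_1\cdots a_n) = \sum_{\pi \in \mathcal L(n)} c_\pi(a_1,\dots,a_n)$. Since this triangular system is invertible (each $\mathcal L(n)$ is a lattice, and for $\mathcal I(n)$ even a Boolean lattice), $c_n(a_1,\dots,a_n)$ can be expressed as a signed combination of nested moments $\mathbb F_\pi(a_1,\dots,a_n)$ indexed by $\pi\in \mathcal L(n)$. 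I would set up both implications by analyzing which partitions survive when the arguments are sorted by the coloring $j:[n]\to \{1,\dots,k\}$ with $a_i\in \mathcal A_{j(i)}$.

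For the direction \emph{independence $\Rightarrow$ vanishing mixed cumulants}, I would proceed by induction on $n$, assuming $j$ is non-constant (otherwise the cumulant is not mixed). Applying Möbius inversion, I would rewrite $c_n(a_1,\dots,a_n)$ as $\sum_{\pi\in\mathcal L(n)}\mu_{\mathcal L}(\pi,1_n)\,\mathbb F_\pi(a_1,\dots,a_n)$ and then use the respective factorization rule for the moments: in the tensor case each nested moment factors fully into a product of single-algebra moments and the cancellation follows from the standard identity $\sum_{\pi \in \mathcal P(n)}\mu(\pi,1_n)=0$ when $n\geq 2$; in the boolean case the factorization is total over intervals, and the sum over $\mathcal I(n)$ telescopes to zero when any two consecutive letters lie in different algebras (and more generally by reducing maximal monochromatic runs); in the free case one exploits the recursive definition of $r_n$ on non-crossing partitions together with the centering identity $\mathbb F(\dot a_1\cdots \dot a_k)=0$ for alternating colorings.

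For the converse, assume all mixed cumulants vanish and expand $\mathbb F(a_1\cdots a_m) = \sum_{\pi\in \mathcal L(m)} c_\pi(a_1,\dots,a_m)$ with $a_i\in \mathcal A_{j(i)}$; only partitions whose every block is monochromatic under $j$ contribute. One then identifies the resulting reduced sum with the factorization defining the given independence: for tensor independence, monochromatic partitions of $[m]$ biject with tuples of partitions of each $j^{-1}(i)$, yielding $\prod_i \mathbb F(\text{ordered product in }\mathcal A_i)$; for boolean independence, any block that is not an interval of consecutive same-color indices would fail to be interval, so only the fully split partition survives and one recovers $\prod_i \mathbb F(a_i)$ (after absorbing same-color neighbors); for free independence one proves the alternating-centered vanishing by induction, peeling off the block containing the last letter and using the inductive hypothesis on the remainder.

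The main obstacle I expect is the free case in the operator-valued setting, because the multiplicative extension $r_\pi$ is only defined on $\mathcal{NC}$ and depends on the interaction of $\mathcal A$ with $\mathcal B$. Here one must use the $\mathcal B$-balancedness of the cumulants, so that constants can be absorbed into their neighbors without altering the partition structure, and carry out the reduction of $\mathcal{NC}(n)$ through successive interval blocks consistently with the coloring. Once this bookkeeping is in order, the three statements reduce to the known theorems of Speicher and Speicher--Woroudi cited above.
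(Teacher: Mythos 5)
First, a point of comparison: the paper does not prove this statement at all --- it is imported from the literature, with the bracketed citations to Speicher and Speicher--Woroudi serving as the ``proof.'' So there is no internal argument to measure your proposal against; it has to stand on its own, and as written it does not yet.

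Your outline identifies the correct standard strategy (M\"obius inversion plus a coloring argument for ``independence $\Rightarrow$ vanishing mixed cumulants,'' and reduction to monochromatic partitions for the converse), but it has a genuine gap in the forward direction. A \emph{mixed} cumulant need not have alternating arguments: for instance $r_3(a_1,a_1',a_2)$ with $a_1,a_1'\in\mathcal A_1$ and $a_2\in\mathcal A_2$ is mixed, yet two consecutive arguments lie in the same algebra. Your centering-plus-factorization argument, and likewise the ``peeling off the block containing the last letter'' induction, only treat tuples whose consecutive arguments come from different algebras, where the defining factorizations of independence apply directly. Handling adjacent arguments from a common algebra is precisely where the standard proofs require the formula for cumulants with products as arguments (Theorem 11.12 of Nica--Speicher, or its operator-valued analogue in Speicher's memoir), which you never invoke; without it the induction does not close. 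A smaller omission: in the converse free direction, the combinatorial fact doing the work is that every non-crossing partition without singletons contains an interval block of size at least two, hence two \emph{adjacent} elements in one block, which is what kills all monochromatic contributions for an alternating centered word; your sketch gestures at this but never states it. Finally, your closing claim that ``the three statements reduce to the known theorems of Speicher and Speicher--Woroudi cited above'' is circular as a proof: those theorems \emph{are} the statement in question, so either the reduction must be carried out in full, or the proof is simply the citation --- which is exactly what the paper does.
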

To make our main theorems more general, so that they are also applicable to non-symmetric notions of independence let us give the following extensions to weights
\begin{definition}
A family of weights on $\mathcal P$ is simply a function $\omega=(\omega_n)_n:\mathcal P\to \mathbb C$, where $\omega_n:\mathcal P(n)\to \mathbb C$. When $n$ is clear from the context, we may write $\omega(\pi)$ instead of $\omega_n(\pi)$.

We may generalize moment-cumulant formulas for weights on $\mathcal P$, as
$$\mathbb{F}(a_1a_2\dots a_n)= \sum_{\pi\in \mathcal{NC}(n)} \omega(\pi) c^{\omega}_{\pi}(a_1,a_2,\dots, a_n),$$
(using the appropriate definition for multiplicative families from \ref{multext}).

For cumulants to be well-defined recursively from moments, we just need $\omega_n(1_n)=\lambda_n\neq 0$, for all $n\geq 1$. We call this kind of weights invertible.

For our purposes, we will only consider weigths with the normalization condition $\omega_1(\{\{1\}\})=1$, so that the first cumulant equals the first moment. 

Often, one requests a monic condition: $\omega_n(1_n)=1$, for all $n\geq 1$ (see \cite{Leh02}), so that the $n$-th moment cumulant formula is monic on the $n$-th cumulant, and hence, in particular, cumulants can always be solved inductively. 
\end{definition}

\section{SI Families of Partitions}
\subsection{Singleton Inductive Weights on Partitions}
Let us first give a more general definition of the property SI, for families \emph{of weights} on partitions (and not only for families of subsets, where the weight is just the subset's indicator function). This will allow us to address the monotone situation as well.
\begin{definition}
For any $n\geq 1$ and any $1\leq r\leq n+1$, we define the poset-homomorphisms $$\Psi^{(n)}_r:\mathcal P(n)\to [0_{n+1},\{\{1,2,\dots,r-1,r+1,\dots ,n+1\},\{r\}\}]\subset \mathcal P(n+1), \quad r\leq n,$$ and $$\Psi^{(n)}_{n+1}:\mathcal P(n)\to [0_{n+1},\{\{1,2,\dots n\},\{n+1\}\}]\subset \mathcal P(n+1),$$ which simply inserts an additional singleton at the position $\{r\}$ on the partition $\pi$, shifting the rest of the partition. 
\end{definition}
For example: for $\pi=\{\{1,3\},\{2,4\}\}$, we have that $\Psi^{(4)}_1(\pi)=\{\{1\},\{2,4\},\{3,5\}\}$, $\Psi^{(4)}_3(\pi)=\{\{1,4\},\{2,5\},\{3\}\}$, and $\Psi^{(4)}_5(\pi)=\{\{1,3\},\{2,4\},\{5\}\}$. 
\begin{definition}
A family of weights $\omega=\{\omega_{i}\}_{i\geq 1}$ on set partitions $\mathcal{P}=\bigcup_{i\geq 1} \mathcal{P}(i)$ is called \emph{Singleton-Inductive}, iff $\omega_1(\{1\})=1$, and all the maps $\Psi^{(n)}_r$ are all weight-preserving poset-isomorphisms $[0_{n+1},\Psi^{(n)}_r(1_n)]\subseteq \mathcal P(n+1)$), that is $\omega_n(\pi)=\omega_{n+1}(\Psi^{(n)}_r(\pi))$
\end{definition}

\begin{example}
Indicator functions on all set partitions, and all non-crossing partitions are SI. Interval partitions are not SI. Indeed, there we have $$\omega_3^{\mathcal {I}}(\psi^{(2)}_2(1_2))= \omega_3^{\mathcal {I}}(\{\{1,3\},\{2\}\})=0\neq 1=\omega_2^{\mathcal {I}}(1_2).$$
The monotone weights are also not SI, as
$$\omega_3^{\mathcal {M}}(\psi^{(2)}_2(1_2))= \omega_3^{\mathcal {M}}(\{\{1,3\},\{2\}\})=1/2\neq 1=\omega_2^{\mathcal {M}}(1_2).$$

Weights for monotone cumulants are determined by the non-crossing partition's \emph{nesting tree factorial} $\omega^{\mathcal {M}}(\pi)=((\mathfrak{F}(\pi))!)^{-1}$ (see \cite{AHLV15}). Monotone weights are monic, multiplicative with respect to interval closures. They penalize nestings, generalizing factorials in the sense that $$\omega^{\mathcal {M}}_{2n}(\{\{1,2n\},\{2,2n-1\},\dots,\{n,n+1\}\})=1/n!.$$ A partition has non-zero monotone weight iff non-crossing, and the weight equals $1$ iff interval.

One may modify interval partitions and monotone weights, by considering \emph{almost-interval} partitions, to make them SI (Section 2.3). 
\end{example}

Now we reformulate Theorem \ref{mainthm} for this slightly more general setting. We will only include the proof for non-trivial $[\mathcal A,\mathcal B]$ and weights supported on $\mathcal {NC}$. The proof for the commutative case is similar.

\subsection{Main Theorem: Independence of Constants}

\begin{theorem}
Let $(\mathcal A,\mathcal B, \mathbb F)$ be an OVPS and let $\omega$ be an invertible SI-weight on $\mathcal {NC}$ from which we define the cumulants $(c^{\omega}_n)_n$. Then
$$c^{\omega}_{1_n}(a_1,\dots,a_n)=0,$$
whenever there is any $a_i\in \mathcal B$.
\end{theorem}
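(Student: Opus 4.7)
The plan is to proceed by strong induction on $n\ge 2$. Fix an index $r$ with $a_r\in\mathcal{B}$, and start from the moment-cumulant identity
\[
\mathbb{F}(a_1\cdots a_n)\;=\;\sum_{\pi\in\mathcal{NC}(n)}\omega(\pi)\,c^{\omega}_{\pi}(a_1,\ldots,a_n).
\]
Partition this sum according to the block $V_r\in\pi$ that contains $r$ into three disjoint classes: (A) $V_r=\{r\}$ is a singleton of $\pi$; (B) $2\le |V_r|\le n-1$; (C) $\pi=1_n$, so $V_r=[n]$. The aim is to match (A) with the left-hand side via SI, to kill (B) by the inductive hypothesis, and to isolate (C) as $\lambda_n\,c^{\omega}_{1_n}$.

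For the base case $n=2$ one has $\mathcal{NC}(2)=\{0_2,1_2\}$; applying SI at level one gives $\omega_2(0_2)=\omega_1(\{1\})=1$, and the identity reads $\mathbb{F}(a_1a_2)=c_1(a_1)c_1(a_2)+\lambda_2\,c_2(a_1,a_2)$. Combining $c_1(a_r)=\mathbb{F}(a_r)=a_r\in\mathcal{B}$ with $\mathbb{F}(a_1a_2)=a_1\mathbb{F}(a_2)$ (or $\mathbb{F}(a_1)a_2$, depending on which argument is in $\mathcal{B}$) forces $\lambda_2\,c_2(a_1,a_2)=0$, hence $c_2(a_1,a_2)=0$ by invertibility of $\omega$.

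In the inductive step, class (B) is handled as follows. If $r$ sits in a block $V$ with $1<|V|<n$, then the multiplicative extension of Definition \ref{multext}(2) can be computed by first peeling off the blocks of $\pi$ other than $V$; each such peeled cumulant lies in $\mathcal{B}$ and is absorbed into a neighboring argument, so the input to $c_{|V|}$ at the position of $r$ has the shape $b\,a_r\,b'$ with $b,b'\in\mathcal{B}$, which again belongs to $\mathcal{B}$. The inductive hypothesis at size $|V|<n$ then makes $c_{|V|}$ vanish, killing $c^{\omega}_{\pi}$ in full. Class (A) is reorganized through SI: the map $\Psi^{(n-1)}_r:\mathcal{NC}(n-1)\to\{\pi\in\mathcal{NC}(n):\{r\}\in\pi\}$ is a weight-preserving poset-isomorphism, and since $c_1(a_r)=a_r$, peeling off the singleton $\{r\}$ from $\pi=\Psi^{(n-1)}_r(\sigma)$ identifies $c^{\omega}_{\pi}(a_1,\ldots,a_n)$ with $c^{\omega}_{\sigma}(\tilde a_1,\ldots,\tilde a_{n-1})$, where $(\tilde a_j)_{j=1}^{n-1}$ is obtained from $(a_j)_{j=1}^{n}$ by absorbing $a_r$ into a neighbor. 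Summing over $\sigma\in\mathcal{NC}(n-1)$ and invoking the moment-cumulant formula at size $n-1$ recognizes the total contribution of (A) as $\mathbb{F}(\tilde a_1\cdots\tilde a_{n-1})=\mathbb{F}(a_1\cdots a_n)$, because the product $\tilde a_1\cdots\tilde a_{n-1}$ is literally $a_1\cdots a_n$. Class (C) contributes $\lambda_n\,c^{\omega}_{1_n}(a_1,\ldots,a_n)$, so after cancelling the common $\mathbb{F}(a_1\cdots a_n)$ the identity reduces to $\lambda_n\,c^{\omega}_{1_n}(a_1,\ldots,a_n)=0$, and $\lambda_n\ne 0$ yields the claim.

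The main obstacle is the bookkeeping in class (B): one must justify that the multiplicative extension of $c^{\omega}_{\pi}$ can legitimately be computed in the order that peels all other blocks before $V$, and that the absorbed $\mathcal{B}$-contributions at the slot of $a_r$ leave that slot in $\mathcal{B}$. Both points rest on the $\mathcal{B}$-balancedness inherited from $\mathbb{F}$ (as recorded in the remark after Definition \ref{multext}), which guarantees both order-independence of the extension and the clean multiplication of surrounding $\mathcal{B}$-elements onto $a_r$, so that the inductive hypothesis is genuinely applicable.
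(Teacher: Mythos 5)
Your proof is correct and follows essentially the same route as the paper: split the partitions according to the block containing the $\mathcal{B}$-valued entry, kill the intermediate-size blocks by induction, match the singleton class against the moment-cumulant formula one level down via the SI isomorphism $\Psi_r$, and conclude by invertibility of $\omega(1_n)$. The only cosmetic difference is that the paper cancels the singleton class against a second expansion of the same moment $\mathbb{F}(a_1\cdots(a_{r-1}b)\cdots a_n)$ term by term, whereas you evaluate that class directly as $\mathbb{F}(a_1\cdots a_n)$; these are the same computation, and the bookkeeping caveats you flag (order of peeling, left/right absorption) are glossed over to the same extent in the paper's own argument.
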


\begin{proof}
We proceed by induction on $n$. We will show that, for $n\geq 2$, the expression $$c_{1_n}(a_1,a_2,\dots,a_n)=0,$$ whenever there is some $a_i=b\in \mathcal B$.

For $n=1$, the SI property just means $\omega_1{\{1\}}=1$ and we have $c_1(b)=b$.

For $n=2$, SI means $1=\omega_1{\{1\}}=\omega_2(\{\{1\},\{2\}\})$ and thus $$\omega_2(\{\{1,2\}\})c^{\omega}_{1_2}(a,b)=\mathbb{F}(ab)-\mathbb{F}(a)\mathbb{F}(b)= \mathbb{F}(a)b-\mathbb{F}(a)b=0,$$ so $c^{\omega}_{1_2}(a,b)=0$ and similarly for $c^{\omega}_{1_2}(b,a)$ 

Suppose that for every $k\leq n$, if there is $i\in \{1,2,\dots, k\}$, with $a_i\in \mathcal{B}$, then $c^{\omega}_{1_k}(a_1,a_2,\dots ,a_k)=0.$ From the moment-cumulant formula over $\mathbb{F}(a_1\cdots a_{r-1} b \cdots  a_{n+1})$ and $\mathbb{F}(a_1\cdots (a_{r-1} b) \cdots  a_{n+1})$
we get
\begin{eqnarray}
     &&\sum_{\sigma\in \mathcal{P}(n+1)} \omega_{n+1}(\sigma)c^{\omega}_{\sigma} (a_1,\dots, b, \dots a_{n+1})  \\
     &=&\sum_{\pi\in \mathcal{P}(n)} \omega_n(\pi)c^{\omega}_{\pi} (a_1,\dots, a_{r-1} b, a_{r+1}, \dots a_{n+1}). 
\end{eqnarray}
There are three types of partitions for $\sigma\in \mathcal P (n+1)$ on the left-hand side of the equation:
\begin{enumerate}
    \item The full partition $1_{n+1}$ (to be solved to show that $\omega(1_{n+1})c^{\omega}_n(a)=0$).
    \item Partitions $\sigma\in [0_{n+1},\{\{1,\dots,r-1,r+1,\dots,n+1\},\{r\}\}]$, (i.e. $\{r\}$ is a singleton of $\sigma$).
    \item Those partitions in the complement. 
\end{enumerate}

For partitions in the last case, the block containing $b$ must have $2\leq k\leq n$ arguments, and thus the contribution vanishes by induction hypothesis.

The partitions from case ii) can be canceled-out by pairing them with the terms corresponding to $\pi=\Psi_r^{-1}(\sigma)\in \mathcal P(n)$ on the right-hand side, due to the SI condition. Thus for the remaining full-interval block, we have $\omega(1_{n+1})c^{\omega}_{1_{n+1}}(a)=0$. Since $\omega$ is invertible, $c^{\omega}_{1_{n+1}}(a)=0$.
\end{proof}

\subsection{New Examples of SI Families}

\begin{definition}
Consider the map $\mathrm {RS}:\mathcal P\to \mathcal P$ that removes all singletons from a partition.

For example: $\mathrm {RS}(\{\{1,4\},\{2\},\{3,6\},\{5\}\})=\{\{1,3\},\{2,4\}\}$.
\end{definition}

\begin{example}\label{ex:SIint}
As in \cite{Ora02} consider the set of almost interval partitions $\tilde {\mathcal I}(n):=\{\pi \in \mathcal {NC}(n): \mathrm{RS}(\pi)\in \mathcal I\}$. Then $\tilde {\mathcal I}:=\bigcup \tilde {\mathcal I}(n)$ is SI.
\end{example}

\begin{example}\label{ex:SImon}
The modified monotone weight $\tilde \omega^{\mathcal M}(\pi):=\omega^{\mathcal M}(\mathrm{RS}(\pi))$ is monic and SI. Similar to the usual monotone weights, we have that $\tilde \omega^{\mathcal M}$ is:

i) Supported on $\mathcal{NC}$

ii) Multiplicative w.r.t. interval closures.

iii) $1$ iff $\pi \in \tilde {\mathcal I}$.
\end{example}

\begin{example}\label{ex:SICI}
The cyclic-boolean, or cyclic-interval partitions, are the interval partitions when partitions are regarded on the circle (not on the semi-line). The only practical difference is that $n$ and $1$ are now considered \emph{consecutive}, which results in enlarging $\mathcal {I}$ to the lattice of cyclic-interval partitions $\mathcal {CI}$. For example, $\{\{1,5,6\},\{2,3\},\{4\}\}\in \mathcal {CI}(6)$, but not in $\mathcal I(6)$. 

$\mathcal {CI}$ is not-SI. However, if we ignore the singletons for the interval condition, as in Example \ref{ex:SIint}, the family of partitions $\widetilde{\mathcal {CI}}=\{\pi\in \mathcal {NC}: \mathrm{RS}(\pi)\in \mathcal{CI}\}$ becomes SI.
\end{example}

\begin{example}
Monotone-like weights can also be defined for partitions on the circle. One must only define a notion of interior/exterior for blocks, when non-crossing partitions are viewed in their circular representations. One possibility is to assign a nesting tree by declaring an outer block when there exist a (necessarily unique) block that contains the center of the circle. If no such block exists, the outer-blocks are the $k\geq 2$ blocks which can be seen from the center of the circle.

These weights are, again, not SI, but become so if we make $\tilde \omega^{\mathcal M}(\pi)=\omega^{\mathcal M}(\mathrm{RS}(\pi))$.
\end{example}

\begin{example}\label{ex:nonmonic}
Non-monic SI weights:

i) Singletons: $\omega(\pi)=1$ iff $\pi=0_k$ for some $k\geq 1$. 

ii) Weights supported on pairings with $\omega^{q}(\pi)=(q)^{\mathrm{cr}(\pi)}$, where $\mathrm{cr}(\pi)$ is the number of crossings of $\pi$, show-up for the moment-cumulant formulas of $q$-Gaussians (see \cite{BKS97}). Removing singletons does not affect the number of crossings, thus, the weight can be extended to one that does satisfy SI: $\tilde \omega^{q}(\pi)=(q)^{\mathrm{cr}(\mathrm{RS}(\pi))}$.
\end{example}

Let us briefly address the boolean probability theory with independent constants that is derived from Example \ref{ex:SIint}. 

This notion independence is called ``Fermi Boolean'' independence, first investigated by F. Oravecz in \cite{Ora02}. He introduced the almost interval partitions and provided a model for independence in terms of creation and annihilation operators. Oravecz also computed the non-centered Central limit theorem and a nice Poisson limit theorem (see also \cite{Ora04}, and Hasebe \cite{Has11}).

\section{Relation to Fermi-Boolean Probability}

The boolean independence with independent constants just mentioned is exactly the Fermi-boolean independence of F. Oravecz.

\begin{definition}
The Fermi-boolean cumulants $\tilde b_n$ are defined inductively by the moment-cumulant formula supported on the lattice of almost-interval partitions $\tilde {\mathcal I}$:
$$\mathbb{F}(a_1a_2\dots a_n)= \sum_{\pi\in \tilde {\mathcal{I}}(n)}\tilde  b_\pi(a_1,a_2,\dots, a_n).$$
\end{definition}

\begin{definition}
The algebras $\mathcal{A}_1,\mathcal{A}_2,\dots,\mathcal{A}_n$ are \emph{$\mathcal{B}$-Fermi-boolean independent} iff all cumulants $\tilde b_n$, when evaluated in mixed moments, vanish.
\end{definition}

What does this independence mean concretely for the moments of non-commutative random variables?. 

First, notice that mixed moments of \emph{centered} random variables \emph{are not affected} by the new combinatorics, and hence the factorization remains the same for these cases, namely $$\mathbb F(a_1a_1a_2a_2a_1a_1a_3a_3a_3a_2a_2a_1a_1)=\mathbb F(a_1^2)\mathbb F(a_2^2)\mathbb F(a_1^2)\mathbb F(a_3^3)\mathbb F(a_2^2)\mathbb F(a_1^2),$$
holds for centered, Fermi-boolean independent $(a_1,a_2,a_3)$.

As a consequence, the centered CLT has the same limiting law as in the usual boolean case (i.e. the centered symmetric Bernoulli distribution, see \cite{SW97}). Furthermore, from the centered case, the non-centered Fermi-boolean central limit follows directly using the standard proof for the classical and free case in terms of cumulants. The resulting limit distribution is just the shifted Bernoulli distribution.

For non-centered random variables, one really needs to solve a different moment-cumulant formula. As observed in \cite{Ora02}, the Fermi-boolean factorization is an instance of the so-called c-free independence: an extension of non-commutative probability where mixed moments are computed with respect to a pair of funcionals $(\tau,\psi)$, where the case $(\tau,\tau)$ recovers free independence w.r.t. $\tau$ and the case $(\tau,\delta_0)$, where $\delta_0$ is the Dirac measure at zero, yields the usual boolean independence (see \cite{BLS96}).

\subsection{Relation with c-free independence}

The case of Fermi-boolean independence with respect to $\tau$ coincides with the c-free independence with respect to the pair of functionals $(\tau,\psi)$, where $\psi(a^n)=(\tau(a))^n$, for all $n\geq 1$.

By considering pairs of conditional expectations $(\mathbb F_{\tau},\mathbb F_{\psi})$, one gets operator-valued c-free independence. The case where $\mathbb F_{\psi}(a^n)=(\mathbb F_{\tau}(a))^n$, $n\geq 1$ yields the factorization for  mixed moments of $\mathbb F_{\tau}$-Fermi-boolean independent operators.

Recall that in operator-valued free probability, the extremal case $\mathbb F=\mathrm{id}=\mathrm{id}_{\mathcal A}$, although trivial, is important because it guarantees that operators are free with respect to a conditional expectation onto a sufficiently large subalgebra $\mathcal B\subset \mathcal A$.

The combinatorics for the case $\mathbb F=\mathrm{id}$ collapse, in the sense that the moment-cumulant formulas are supported on the non-invertible lattice $\mathcal L(n):=\{0_n\}$, $n\geq 1$ from Example \ref{ex:nonmonic} i).

Consider now the extremal c-free cases $(\mathrm{id},\mathbb F)$ and $(\mathbb F,\mathrm{id})$. The moment-cumulant formulas in the first case are again supported on the non-invertible lattice $\mathcal L(n):=\{0_n\}$, $n\geq 1$. For the later case, the formula is supported on the almost interval partitions $\widetilde {\mathcal{I}}$.

Thus, the Fermi-boolean independence, from which we know a lot for its intersection with the usual boolean probability, seems more robustly related with free independence, as:
\begin{itemize}
    \item Cumulants appear more canonically in the framework of operator-valued c-free independence, and
    \item They now share the property of independent constants, in the framework of operator-valued independence
\end{itemize}

We expect that a combination of both extensions of non-commutative probability could lead to practical applications of boolean probability theory. 

For example, an approach to mixed moments of random unitary matrices (which are centered elements with nice boolean cumulants) and deterministic matrices (which are constants w.r.t the conditional expectation of entry-wise evaluation), does not seem unrealistic.

Before concluding this work, we compute, in the next section, the M\"obius functions of some of the new families of partitions from Section 3. M\"obius functions are important for expressing cumulants in terms of moments more directly, and are useful for deriving cumulant-to-cumulant formulas.

We obtain some interesting integer sequences and recursions.

\section{M\"obius Functions}

For any locally compact partially ordered set $P$, we may consider its incidence algebra, of complex-valued functions $f:P\times P\to \mathbb C$ on poset intervals, where multiplication is given by combinatorial convolutions (see \cite{Sta12}). 

Standard elements of the incidence algebra are the multiplicative unit $\delta$, the zeta function $\zeta$ and its inverse, known as the M\"obius function $\mu$. 
The values of the functions $\mu_{\mathcal {P}}$, $\mu_{\mathcal {NC}}$ and $\mu_{\mathcal {I}}$ over the full intervals $[0_n,1_n]$ are standard combinatorial sequences: $(-1)^{n-1}(n-1)!, (-1)^{n-1}C_{n-1}$ and $(-1)^{n-1}$ respectively, where $C_n$ are de Catalan numbers.

To warm-up, let us first describe the lattices of interval partitions and their M\"obius functions. 

It is convenient to observe that the interval partitions $\mathcal I(n)$ are in bijection with binary words of length $n-1$:  $B_{n-1}:=\{b=b_1b_2b_{n-1}, b_i\in \{0,1\}\}$. Indeed we may think of the $b_i$'s as $n-1$ buttons between $n$ consecutive numbers, where $b_r=1$, resp. $b_r=0$ indicate that the button is pressed/unpressed, and we associate it with a partition $\pi\in \mathcal I(n)$, where either $r\sim r+1$ or not (Figure 1, left).
\begin{center}
  \begin{tikzpicture}[x=1.8cm, y=1.6cm]
    \tikzstyle{dianode} = [minimum size=1.75mm,fill,circle,inner sep=0pt];


    \draw (0,1) node[dianode] (p111) {};
    \draw (p111)+(0,0.05) node[above] {\partition{1}{1}{1}};

    \draw (1,0) node[dianode] (p011) {};
    \draw (p011) node[above right] {\partition{0}{1}{1}};

    \draw (-1,0) node[dianode] (p110) {};
    \draw (p110) node[above left] {\partition{1}{1}{0}};

    \draw (0,-1) node[dianode] (p010) {};
    \draw (p010) node[below right] {\partition{0}{1}{0}};

    \draw (p111) -- (p011) -- (p010) -- (p110) -- (p111);

    \draw (0,0) node[dianode] (p101) {};
    \draw (p101) node[above right] {\partition{1}{0}{1}};

    \draw (1,-1) node[dianode] (p001) {};
    \draw (p001) node[below right] {\partition{0}{0}{1}};

    \draw (-1,-1) node[dianode] (p100) {};
    \draw (p100) node[below left] {\partition{1}{0}{0}};

    \draw (0,-2) node[dianode] (p000) {};
    \draw (p000) node[below] {\partition{0}{0}{0}};

    \draw (p101) -- (p001) -- (p000) -- (p100) -- (p101);

    \draw (p111) -- (p101);
    \draw (p011) -- (p001);
    \draw (p010) -- (p000);
    \draw (p110) -- (p100);


    \draw (3.5,0) node[dianode] (q) {};
    \draw (q)+(0,0.05) node[above] {\partitionc};

    \draw (2.5,-1) node[dianode] (q100) {};
    \draw (q100) node[left] {\partitionb{1}{0}{0}};

    \draw (4.5,-1) node[dianode] (q110) {};
    \draw (q110) node[right] {\partitiond};

    \draw (3.5,-2) node[dianode] (q000) {};
    \draw (q000) node[below] {\partitionb{0}{0}{0}};

    \draw (3.5,-1) node[dianode] (q010) {};
    \draw (q010) node[left] {\partitionb{0}{1}{0}};
 
    \draw (q) -- (q110) -- (q000) -- (q100) -- (q);
    \draw (q) -- (q000);

    \draw (q)+(0.1,0.23) node[right] {$\longleftrightarrow \left\{\!\!\begin{array}{l}\bullet\times\times, \\ \times\bullet\times, \\ \times\times\bullet\end{array}\!\!\right\}$};
  \end{tikzpicture}
  
        Fig. 1: Hasse diagrams of $\mathcal I(4)$ (left) and $\mathcal {CI}(3)$ (right)
\end{center}

The Hasse Diagram is a hyper-cube (see Figure 1, left) and the M\"obius function is easily shown to be $(-1)^{n-1}$, since M\"obius functions are multiplicative w.r.t. cartesian products and $\mathcal I(n)\cong (\mathcal I(2))^{n-1}$. 

An alternative proof, which will be useful later, goes as follows:

Recall first Weisner's Lemma:  For every finite lattice $\mathcal{R}$ and every $\sigma\in \mathcal{R}\setminus\{0_{\mathcal{R}}\}$, $$\sum_{\pi\in \mathcal{R},\ \pi\vee \sigma=1} \mu_{\mathcal{R}}([0_{\mathcal{R}},\pi])=0.$$

We apply Weisner's Lemma for the full lattice (i.e. case $\sigma=1_n$). By induction, the sum in the left hand side is just the alternating sum of binomial coefficients
$$\sum_{\pi\in \mathcal I(n)} \mu_{\mathcal I}([0_n,\pi])= \mu_{\mathcal I}([0_n,1_n])+ \sum_{0\leq k\leq n-1}(-1)^k{n \choose k}=0.$$
Thus solving for $\mu_{\mathcal I}([0_n,1_n])$ gives $(-1)^{n-1}$.

We now will make analog arguments for cyclic-interval partitions.
\subsection{Cyclic-interval Partitions}

\begin{proposition}
The poset of cyclic-interval partitions $\mathcal{CI}(n)$ is isomorphic to the poset $\mathcal I(2)^{n}$, with the highest two levels of the Hasse diagram collapsed (see Fig. 1, right).
\end{proposition}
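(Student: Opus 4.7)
The plan is to exhibit a surjective order-preserving map $\Phi : \mathcal{I}(2)^n \to \mathcal{CI}(n)$ whose unique non-trivial fiber lies over $1_n$ and consists precisely of the top two levels of the Hasse diagram of $\mathcal{I}(2)^n$. Identify $\mathcal{I}(2)^n$ with the Boolean cube $\{0,1\}^n$ under componentwise order, where each coordinate $b_j$ represents a ``button'' on the circle: $b_j$ sits between $j$ and $j+1$ for $j\leq n-1$, and $b_n$ sits between $n$ and $1$. Define $\Phi(b)$ as the cyclic-interval partition in which $j$ and $j+1 \pmod n$ belong to the same block iff $b_j=1$ (extended by transitivity). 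Order-preservation is immediate: pressing additional buttons can only coarsen the resulting partition.

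Next I would compute the fibers of $\Phi$. Surjectivity is clear, since any $\pi\in \mathcal{CI}(n)$ is recovered by pressing exactly the buttons internal to its blocks. For $\pi\neq 1_n$, the partition has at least two ``cuts'' around the circle, and is uniquely encoded by the word with $0$'s at those cuts. Finally, $\Phi(b) = 1_n$ iff the cycle on $n$ vertices remains connected after deleting the edges where $b_j=0$; since a simple $n$-cycle stays connected iff at most one edge is removed, $\Phi^{-1}(1_n)$ consists of exactly $n+1$ words, namely $(1,\dots,1)$ together with the $n$ coatoms (words with a single $0$). These are precisely the top two levels of the Hasse diagram of $\{0,1\}^n$.

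To finish, I would show that this fiber collapse yields the desired isomorphism. The top two levels form an upward-closed set in $\{0,1\}^n$, so identifying them to a single new maximum produces a well-defined quotient poset in which the order on the surviving elements is the restricted componentwise order. On those elements (words with at least two $0$'s), $\Phi$ is injective and each $b_j$ is recovered from $\pi = \Phi(b)$ as the indicator of ``$j\sim j+1$ in $\pi$'', so $b\leq b'$ componentwise iff every adjacent merge of $\pi_b$ persists in $\pi_{b'}$, iff $\pi_b\leq \pi_{b'}$ in reverse refinement. Hence $\Phi$ descends to a bijective poset isomorphism onto $\mathcal{CI}(n)$. The only step requiring real care is the characterization of $\Phi^{-1}(1_n)$, which reduces to the elementary observation about cycle connectivity; as a sanity check, the resulting count $|\mathcal{CI}(n)| = 2^n - n$ matches the five elements of $\mathcal{CI}(3)$ shown in Fig.~1.
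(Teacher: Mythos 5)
Your proof is correct and follows essentially the same route as the paper: the button encoding of $\mathcal{CI}(n)$ by words in $\{0,1\}^n$ with an extra button between $n$ and $1$, noting that the all-ones word and the $n$ single-zero words all collapse onto $1_n$ while the map is bijective elsewhere. You merely make explicit the details the paper leaves implicit, in particular the cycle-connectivity argument identifying $\Phi^{-1}(1_n)$ with the top two levels of the cube.
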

\begin{proof}
We use binary buttons again but this time there is an additional button between $n$ and $1$ (see Figure 1, right). Observe that this gives a bijection if we ignore word of $n+1$ ones (which now makes no sense), and we identify all words with a single zero with the full interval partition $1_n$. The assertion follows.
\end{proof}
\begin{corollary}
\begin{enumerate}
    \item All intervals $[\sigma, \pi]\subset \mathcal{CI}$ are either isomorphic to a smaller $\mathcal{CI}(k)$ (case $\pi=1_n$) or a smaller $\mathcal{I}(k)$ (case $\pi\neq 1_n$).
    \item The cardinality of $\mathcal{CI}(n)$ is $2^{n}-n$.
    \item The M\"obius functions for cyclic-interval partitions is $\mu_{\mathcal{CI}}([0_n,1_n])=(-1)^{n+1}(n-1)$. By (1), the M\"obius function can be obtained for any interval.
\end{enumerate}
\end{corollary}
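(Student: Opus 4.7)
The plan is to prove the three claims in order, leveraging the $n$-button circular encoding introduced in the preceding proposition together with a simple decomposition of sub-intervals.

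For (1), I would start from the standard observation that, for any $\sigma\leq\pi$ in a partition lattice, an element $\tau\in[\sigma,\pi]$ is uniquely determined by how it merges blocks of $\sigma$ inside each block of $\pi$; concretely, $[\sigma,\pi]\cong\prod_{V\in\pi}[\sigma|_V,1_V]$, the product being taken in whichever ambient lattice we are working with. The case split then depends on whether a block of $\pi$ wraps around the circle. If $\pi=1_n$, the single block is the whole cycle and the merging is cyclic on the $|\sigma|$ cyclically-arranged blocks of $\sigma$, giving $[\sigma,1_n]\cong\mathcal{CI}(|\sigma|)$. If $\pi\neq 1_n$, every block of $\pi$ is a non-wrap-around arc and therefore, up to relabeling, a linear interval on which cyclic-interval refinements coincide with ordinary interval refinements; so $[\sigma,\pi]\cong\prod_{V\in\pi}\mathcal{I}(k_V)$ with $k_V$ the number of blocks of $\sigma$ contained in $V$. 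Via $\mathcal{I}(m)\cong\mathcal{I}(2)^{m-1}$, this product collapses into a single $\mathcal{I}(|\sigma|-|\pi|+1)$.

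For (2), I would count binary words on the $n$ circular buttons. There are $2^n$ of them, but the all-ones word together with the $n$ words containing exactly one zero all encode $1_n$ (a single broken link in the cycle still leaves the chain connected). The remaining $2^n-(n+1)$ words biject with $\mathcal{CI}(n)\setminus\{1_n\}$, and adding $1_n$ back yields $|\mathcal{CI}(n)|=2^n-n$.

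For (3), I would mimic the Weisner-style argument used in the warm-up for $\mathcal{I}$. The defining identity $\sum_{\pi\in\mathcal{CI}(n)}\mu_{\mathcal{CI}}([0_n,\pi])=0$ isolates $\mu_{\mathcal{CI}}([0_n,1_n])$ and a sum over $\pi<1_n$; by part (1) every such summand equals $\mu_{\mathcal{I}}([0_n,\pi])=(-1)^{n-k(\pi)}$, and the button encoding gives exactly $\binom{n}{k}$ partitions with $k$ blocks for $k\in\{2,\dots,n\}$. Hence
\[
  0=\mu_{\mathcal{CI}}([0_n,1_n])+\sum_{k=2}^{n}\binom{n}{k}(-1)^{n-k}.
\]
The vanishing identity $\sum_{k=0}^{n}\binom{n}{k}(-1)^{n-k}=0$ reduces the remaining sum to $(-1)^{n}(n-1)$, so $\mu_{\mathcal{CI}}([0_n,1_n])=(-1)^{n+1}(n-1)$.

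The main obstacle is really the case split in part (1): one must justify cleanly that a proper cyclic-interval partition $\pi\neq 1_n$ necessarily has only non-wrap-around blocks, so that cutting the circle at any position that separates two blocks of $\pi$ turns each factor $[\sigma|_V,1_V]$ into a genuine interval sub-lattice, while the degenerate case $\pi=1_n$ is precisely what produces the extra ``cyclic'' structure. Once (1) is in hand, (2) and (3) are short binomial bookkeeping.
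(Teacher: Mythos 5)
Your proposal is correct in substance and follows essentially the same route as the paper: the circular button encoding underlies parts (1) and (2), and part (3) reduces, via Weisner's Lemma (here just the defining recursion for $\mu$), to the same alternating binomial sum --- the only cosmetic difference being that the paper evaluates that sum by matching the lower part of $\mathcal{CI}(n)$ with the lower part of $\mathcal{I}(n+1)$, while you compute it directly inside $\mathcal{CI}(n)$. One intermediate claim in your part (1) is literally false and should be repaired: a proper cyclic-interval partition $\pi\neq 1_n$ \emph{can} have a block wrapping around the gap between $n$ and $1$ (e.g.\ $\{\{1,5,6\},\{2,3\},\{4\}\}\in\mathcal{CI}(6)$, the paper's own example), so it is not true that every block of $\pi$ is a non-wrap-around arc. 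What is true, and suffices, is that $\pi\neq 1_n$ has at least two blocks and hence at least one point of the circle separating two distinct blocks; cutting there and relabeling cyclically makes every block a genuine linear interval, after which your decomposition $[\sigma,\pi]\cong\prod_{V\in\pi}[\sigma|_V,1_V]\cong\mathcal{I}(|\sigma|-|\pi|+1)$ goes through. You in fact describe exactly this cut in your closing remarks, but it contradicts, rather than justifies, the ``no wrap-around blocks'' assertion; simply replace that assertion with the cut.
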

\begin{proof}
For the first statement it is simple to see that the button bijection restricted to an interval $[\sigma,\pi]$ is an isomorphism to words of binary buttons, where some of the buttons have been already pressed (indicated by $\sigma$) and some buttons are forbidden (indicated by $\pi$).

The second statement is trivial. The third statement is proved using again Weisner's Lemma, applied to $\mathcal I(n+1)$ and $\mathcal {CI}(n)$. $$0=\sum_{\pi\in \mathcal {I}(n)} \mu_{\mathcal {I}}([0_{n+1},\pi])=\sum_{\pi\in \mathcal {CI}(n)} \mu_{\mathcal {CI}}([0_n,\pi])=\mu_{\mathcal {CI}}([0_n,1_n])+ \sum_{0\leq k\leq n-1}(-1)^k{n \choose k},$$ since the lower part of the lattice $\mathcal {CI}(n)$ is isomorphic to the lower part of $\mathcal I(n+1)$. Thus we may cancel most summands. Solving for $\mu_{\mathcal {CI}}([0_n,1_n])$ yields the result $$\mu_{\mathcal {CI}}([0_n,1_n])=\mu_{\mathcal {I}}([0_{n+1},1_{n+1}])+(-1)^{n-1}{n \choose n-1}=(-1)^{n}+n(-1)^{n-1}.$$
\end{proof}
Now, we move our attention to almost-interval partitions

\subsection{Almost-interval Partitions}

\begin{proposition}
The number of elements in the poset of almost-interval partitions $|\widetilde{\mathcal{I}}(n)|=F_{2n-1}$ are the odd Fibonacci numbers $(1,2,5,13,34,89,\dots)$ .
\end{proposition}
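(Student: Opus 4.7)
The plan is to establish a three-term recurrence for $a_n:=|\widetilde{\mathcal{I}}(n)|$ and match it against the classical recurrence for odd-indexed Fibonacci numbers.

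First I would give a structural description of $\widetilde{\mathcal I}(n)$. Since singletons create no crossings, an element $\pi\in\widetilde{\mathcal I}(n)$ is completely determined by a subset $S\subseteq[n]$ of singleton positions together with a partition of the complement $T=[n]\setminus S$ into consecutive runs (in the induced linear order on $T$) of size $\geq 2$, which give the non-singleton blocks. Conversely, any such datum produces a non-crossing partition whose removal-of-singletons is an interval partition.

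Next I would decompose by conditioning on the block $B$ containing $n$. If $B=\{n\}$ the restriction to $[n-1]$ is arbitrary in $\widetilde{\mathcal I}(n-1)$, contributing $a_{n-1}$. Otherwise, let $j=\min B-1\in\{0,1,\dots,n-2\}$. Then the restriction to $[j]$ is an arbitrary element of $\widetilde{\mathcal I}(j)$; by the structural description, every element of $\{j+1,\dots,n\}\setminus B$ is forced to be a singleton; and $B$ is determined by an arbitrary subset of the ``interior'' positions $\{j+2,\dots,n-1\}$, giving $2^{n-j-2}$ choices. This yields the recurrence
\begin{equation*}
a_n \;=\; a_{n-1} \,+\, \sum_{j=0}^{n-2} 2^{\,n-j-2}\,a_j.
\end{equation*}

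Writing the analogous identity for $a_{n-1}$, multiplying by $2$, and subtracting from the one for $a_n$, the geometric convolution telescopes and leaves only $a_{n-2}\cdot 2^{0}$, yielding the second-order recurrence $a_n = 3a_{n-1}-a_{n-2}$. The odd-indexed Fibonacci numbers satisfy exactly this recurrence (apply $F_k=F_{k-1}+F_{k-2}$ twice to get $F_{2n-1}=3F_{2n-3}-F_{2n-5}$), and the base cases $a_1=1=F_1$ and $a_2=2=F_3$ are immediate, so $a_n=F_{2n-1}$ follows by induction. (Equivalently, one could compute the ordinary generating function: the number of compositions of $k$ into parts $\geq 2$ has generating function $(1-x)/(1-x-x^2)$, and its binomial transform simplifies to $(1-2x)/(1-3x+x^2)$, whose denominator encodes the same recurrence.)

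I do not expect any serious obstacle. The only delicate point is the structural step: one must check carefully that the decomposition by $\min B$ is a bijection, with the forced singletons between $j+1$ and $n$ accounting for every element outside $B$ in that range, and that no further non-crossing or interval constraint is violated when we glue an arbitrary $\widetilde{\mathcal I}(j)$-piece to the left of position $j+1$.
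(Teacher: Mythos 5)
Your proposal is correct, but it follows a genuinely different route from the paper's. The paper decomposes $\widetilde{\mathcal I}(n+1)$ according to the first right-neighbour $r$ of the element $1$ (treating the case where $1$ is a singleton separately): the elements $2,\dots,r-1$ are then forced singletons and $1$ may be collapsed with $r$, which yields the full-history recurrence $a_{n+1}=2a_n+a_{n-1}+\cdots+a_1$, matched to $F_{2n+1}=2F_{2n-1}+F_{2n-3}+\cdots+F_1$ by a chain of applications of the Fibonacci recursion. You instead condition on the entire block $B$ containing $n$, recording $\min B=j+1$ and the free interior of $B$, which gives the weighted convolution $a_n=a_{n-1}+\sum_{j=0}^{n-2}2^{n-j-2}a_j$ and telescopes to the two-term recurrence $a_n=3a_{n-1}-a_{n-2}$, the standard recurrence for odd-indexed Fibonacci numbers. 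I verified your decomposition: the structural description of $\widetilde{\mathcal I}(n)$ as (singleton set) $+$ (composition of the complement into consecutive runs of size $\geq 2$) is right, no block other than $B$ can straddle position $j+1$ (it would cross $B$ even after removing singletons), the forced singletons inside the span of $B$ are exactly as you say, and the reassembly with an arbitrary $\widetilde{\mathcal I}(j)$-prefix always lands back in $\widetilde{\mathcal I}(n)$; the telescoping $a_n-2a_{n-1}=a_{n-1}-a_{n-2}$ and the base cases also check out. Your route costs a slightly more delicate bijection but buys a clean second-order linear recurrence and the explicit generating function $(1-2x)/(1-3x+x^2)$, whereas the paper's version is shorter because it reuses the SI property already established for $\widetilde{\mathcal I}$.
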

\begin{proof}
For $n=1,2$ the assertion is trivial. For $n\geq 3$ we use induction: the set $\widetilde{\mathcal {I}}(n+1)$ can be decomposed into disjoint subsets $I_1,\dots I_{n+1}$, where:

i). $I_{1}$ is the subset that collects all partitions where $1$ is a singleton: $\{\sigma \in \widetilde{\mathcal {I}}(n+1), \{\{1\}\}\in \sigma\}$. By SI, these are in bijection with the set $\widetilde{\mathcal {I}}(n)$.

ii). $I_2$ is the subset $\sigma \in \widetilde{\mathcal {I}}(n+1),$ where $1\sim 2$. These are in bijection with the set $\widetilde{\mathcal {I}}(n)$, by collapsing $1$ and $2$. 

iii). Similarly, for each $2\leq r\leq n+1$, $I_r$ is the subset that collects all partitions for which the first neighbor of $1$ to the right is $r$. Observe that this forces $2,3,\dots,r-1$ to be singletons in $\sigma$. By collapsing $1$ with $r$ and the SI condition used $r-2$ times, these are in bijection with $\widetilde{\mathcal {I}}(n+2-r)$, $r=2,3,\dots, n-1$.

Thus the set $\widetilde{\mathcal {I}}(n+1)$ is partitioned into $n+1$ subsets with cardinalities $|\widetilde{\mathcal {I}}(n)|,|\widetilde{\mathcal {I}}(n)|,|\widetilde{\mathcal {I}}(n-1)|,|\widetilde{\mathcal {I}}(n-2)|,\dots ,|\widetilde{\mathcal {I}}(1)|$. 

By induction hypothesis (using $F_1=F_2$) and an $n$-fold chain-application of the Fibonacci recursion $F_n+F_{n-1}=F_{n+1}$, we get $$|\widetilde{\mathcal {I}}(n+1)|=2F_{2n-1}+\cdots + F_{3}+ F_{1}=2F_{2n-1}+\cdots + F_{3}+F_{2}=F_{2n+1}.$$
\end{proof}
To tackle the problem of computing the values of the M\"obius function, we invoke again Weisner's Lemma.
\begin{theorem} \label{thm:moeb}
The following formula holds for the M\"obius functions of almost-interval partitions:
$$\mu_{\widetilde{\mathcal{I}}}([0_n,1_n])=-2\mu_{\widetilde{\mathcal{I}}}([0_{n-1},1_{n-1}]),$$ for $n\geq 3$, and hence $(\mu_{\widetilde{\mathcal{I}}}([0_n,1_n]))_n$ is just the sequence $(1,-1,2,-4,8,\dots)$
\end{theorem}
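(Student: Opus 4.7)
The plan is to apply Weisner's Lemma to $\widetilde{\mathcal I}(n)$ with the specific atom
$\sigma := \{\{1,2\},\{3\},\{4\},\dots,\{n\}\}\in \widetilde{\mathcal I}(n)$. Since $\sigma$ contributes only the identification $1\sim 2$, the condition $\pi\vee \sigma = 1_n$ forces $\pi$ to have at most two blocks: either $\pi=1_n$ itself, or $\pi$ is a two-block partition of $[n]$ separating $1$ and $2$. The goal is that the only such two-block partitions in $\widetilde{\mathcal I}(n)$ are very few in number, and that their M\"obius values are accessible via SI, so that Weisner collapses to the desired recursion.

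First I would verify that the only two-block partitions in $\widetilde{\mathcal I}(n)$ separating $1$ and $2$ are
$\pi^{(1)}:=\{\{1\},\{2,3,\dots,n\}\}$ and $\pi^{(2)}:=\{\{2\},\{1,3,4,\dots,n\}\}$. The key observation here is that if $\pi\in \widetilde{\mathcal I}(n)$ had both $1$ and $2$ lying in \emph{distinct} non-singleton blocks, then these two blocks would be adjacent in the non-singleton positions and yet would have to form an interval partition of those positions, which is impossible. Consequently, in any $\pi$ separating $1$ and $2$, at least one of $\{1\},\{2\}$ is a singleton of $\pi$, and the two-block condition then pins $\pi$ down to $\pi^{(1)}$ or $\pi^{(2)}$. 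A parallel check confirms that for these $\pi$ the $\mathcal P$-join $\pi \vee \sigma$ is already $1_n$, and that no partition of $[n]$ with three or more blocks can yield join $1_n$ (since $\sigma$ only merges $1$'s and $2$'s blocks, leaving at least two blocks in place).

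Next, SI gives the M\"obius values at $\pi^{(1)}$ and $\pi^{(2)}$: every refinement of $\pi^{(1)}$ in $\widetilde{\mathcal I}(n)$ must keep $\{1\}$ as a singleton, so the poset-isomorphism $\Psi^{(n-1)}_1$ from Section~3.1 identifies $[0_n,\pi^{(1)}]$ with $[0_{n-1},1_{n-1}]$; hence
$\mu_{\widetilde{\mathcal I}}([0_n,\pi^{(1)}])=\mu_{\widetilde{\mathcal I}}([0_{n-1},1_{n-1}])$,
and the analogous argument using $\Psi^{(n-1)}_2$ yields the same value for $\pi^{(2)}$. Weisner's Lemma then produces
$\mu_{\widetilde{\mathcal I}}([0_n,1_n]) + 2\,\mu_{\widetilde{\mathcal I}}([0_{n-1},1_{n-1}])=0$,
which is the claimed recursion for $n\geq 3$. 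Combined with the direct computation $\mu_{\widetilde{\mathcal I}}([0_2,1_2])=-1$ from the two-element poset $\widetilde{\mathcal I}(2)$, this produces the sequence $(1,-1,2,-4,8,\dots)$. The most delicate step is the casework ruling out three-or-more-block partitions and the ``missing'' nested almost-interval two-block partitions; once these are cleared, the recursion is an immediate consequence of SI and a single application of Weisner.
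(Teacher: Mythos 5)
Your proposal is correct and follows exactly the paper's own argument: Weisner's Lemma applied to $\widetilde{\mathcal I}(n)$ with the same atom $\sigma=\{\{1,2\},\{3\},\dots,\{n\}\}$, the same identification of $1_n$, $\{\{1\},\{2,\dots,n\}\}$ and $\{\{2\},\{1,3,\dots,n\}\}$ as the only partitions joining with $\sigma$ to $1_n$, and the same use of the SI isomorphisms $\Psi^{(n-1)}_1,\Psi^{(n-1)}_2$ to evaluate $\mu$ on the two coatoms. Your casework ruling out the other two-block and many-block partitions is in fact more detailed than what the paper records.
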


\begin{proof}
Let $n\geq 3$ and apply Weisner's Lemma for $\sigma=\{\{1,2\},\{3\},\dots, \{n\}\}\in \widetilde{\mathcal{I}}(n)$. Then $\pi\in \widetilde{\mathcal{I}}(n)$, $\pi \vee \sigma = 1_n$ if and only if 
$$\pi= 1_{\widetilde{\mathcal{I}}(n)},\quad \ \pi=\{\{1\},\{2,3,\dots,n\}\}, {\rm or}\quad  \pi=\{\{2\},\{1,3,\dots, n\}\}.$$ Since $\widetilde{\mathcal{I}}$ is SI, it follows that $$-\mu_{\widetilde{\mathcal{I}}}([0_n,1_n])=2\mu_{\widetilde{\mathcal{I}}}([0_{n-1},1_{n-1}]).$$
\end{proof} 

\begin{remark}
All intervals $[\sigma,\pi]$, $\sigma\leq \pi \in \widetilde{\mathcal{I}}(n)$ can be shown to be isomorphic to the Cartesian product of smaller $\widetilde{\mathcal{I}}(r)$'s and ${\mathcal{I}}(k)$'s. Thus, the M\"obius function can be computed on arbitrary intervals and the values are always of the form $\{\pm 2^k\}$, $k\geq 0$.
\end{remark}

\end{document}